
\documentclass[]{interact}

\usepackage{epstopdf}
\usepackage[caption=false]{subfig}

\usepackage[numbers,sort&compress]{natbib}
\bibpunct[, ]{[}{]}{,}{n}{,}{,}

\theoremstyle{plain}
\newtheorem{theorem}{Theorem}[section]
\newtheorem{lemma}[theorem]{Lemma}

\newtheorem{proposition}[theorem]{Proposition}

\theoremstyle{definition}

\theoremstyle{remark}

\begin{document}

\title{Generating functions for vector partition functions and a basic recurrence relation}

\author{
\name{Alexander~P. Lyapin\textsuperscript{a}\thanks{CONTACT Alexander Lyapin. Email: aplyapin@sfu-kras.ru} and Sreelatha Chandragiri\textsuperscript{b}}
\affil{\textsuperscript{a}School of Mathematics \& Computer Science, Siberian  Federal  University, Svobodny, 79, Krasnoyarsk 660041, Russia; \textsuperscript{b}School of Mathematics \& Computer Science, Siberian  Federal  University, Svobodny, 79, Krasnoyarsk 660041, Russia}
}

\maketitle

\begin{abstract}
We define a generalized vector partition function and derive an identity for generating series of such functions associated with solutions of basic recurrence relation of combinatorial analysis. As a consequence we obtain the generating function of the number of generalized lattice paths and a new version of Chaundy-Bullard identity for the vector partition function.
\end{abstract}

\begin{keywords}
Generating function; basic recurrence relation; vector partition function; Chaundy-Bullard identity
\end{keywords}

\section{Statement of the main results}
A basic identity in the theory of summation of functions is the following, practically tautological,
relation:
\begin{equation}\label{eq1}
  \varphi(x) - \varphi(0) = \sum_{k=1}^x \left(\varphi(k) - \varphi(k-1)\right), x \in \mathbb N,
\end{equation}

If for a given function $h(x)$ it is possible to find a function $\varphi(x)$, such that $\varphi(x) - \varphi(x-1) = h(x)$, then \eqref{eq1}  becomes
\begin{equation}\label{eq2}
  \left.  \varphi(k) \frac{}{} \right|_0^x = \sum_{k=1}^x h(k),
\end{equation}
which is the discrete analogue of the Newton-Leibniz formula, and the function $\varphi(x)$  is a discrete primitive for the function $h(x)$.
For the generating function $\Phi(\xi) = \sum\limits_{x=0}^{\infty} \varphi(x) \xi^x, \xi\in\mathbb C$, identity \eqref{eq1} is equivalent to
\begin{equation}\label{eq3}
\Phi(\xi) - \frac{\Phi(0)}{1-\xi} = \frac 1{1-\xi}\sum_{x=1}^{\infty} \left( \varphi(x) - \varphi(x-1) \right) \xi^x
\end{equation}
or
\begin{equation}\label{eq4}
(1-\xi)\Phi(\xi) - \Phi(0)= \sum_{x=1}^{\infty} \left( \varphi(x) - \varphi(x-1) \right) \xi^x.
\end{equation}
In this paper we define a generalized vector partition function associated with a set of lattice vectors and with a complex-valued function of integer arguments. We give a multidimensional analogue of identity \eqref{eq4} in Theorem \ref{th1} and use it to investigate some properties of generalized lattice paths (Propositions \ref{prep1} and \ref{prep2}) and prove a version of the Chaundy-Bullard identity for the generalized vector partition function (Proposition \ref{prep3}, see \cite{ChaundyBullard1960}).

Let $\mathbb Z$ be the set of integers,  $\mathbb Z^n = \mathbb Z \times \cdots \times \mathbb Z$ and $\Delta = \{\alpha^1, \alpha^2, \ldots, \alpha^N\} \subset \mathbb Z^n$ be a finite set of column vectors. Let $\mathbb R^n_\geqslant$ be a subset of $\mathbb R^n$ with non-negative coordinates. We let K denote the cone $K$ spanned by the vectors in $\Delta$:
$$
K = \{\lambda \in \mathbb R^n : \lambda =   x_1 \alpha^1 + \cdots + x_N \alpha^N, x \in \mathbb R^N_\geqslant\}.
$$
We assume that cone $K$ is \emph{pointed}, which means it does not contain any line or equivalently lies in an open half-space of $\mathbb R^n$.  We let  $A = [\alpha^1, \ldots, \alpha^N]$ denote $(n \times N)$-matrix composed of the column vectors in $\Delta$.

The vector partition function  $P_A(\lambda)$ of $\lambda \in \mathbb  Z^n$ is (see, for example, \cite{Stanley1990}) the number of non-negative integer solutions to a linear Diophantine equation $\alpha^1 x_1 + \ldots + \alpha^N x_N = \lambda$:
\begin{equation}\label{eq5}
P_A(\lambda) = \sum_{\substack{ x: Ax=\lambda\\ x \in \mathbb Z^N_\geqslant}}1, \;\;\lambda \in \mathbb Z^n.
\end{equation}

Geometrically the function $P_A(\lambda)$ equals the number of representations of the vector $\lambda$ as a linear combination of the vectors in $\Delta$ with non-negative integer coefficients.

In \cite{BrionVergne1997} properties of the function
\begin{equation}\label{eq6}
P_A(y;\lambda) = \sum_{\substack{ x: Ax=\lambda\\ x \in \mathbb Z^N_\geqslant}} e^{-\langle x,y \rangle}, y \in \mathbb C^N,
\end{equation}
called \emph{the vector partition function associated with the set of vectors $\Delta$}, are investigated. In particular, they derive the residue formulas for its generating function and an analog of the Euler-Maclaurin formula, in which the vector partition functions are represented as the action of the Todd operator on the volume function of a polyhedron.

Furthermore, a sum of $e^{\langle x,y \rangle}$ in integer cones was investigated in \cite{PukhlikovKhovanskii1993} in connection to generalization of the Riemann-Roch theorem. A structure theorem for vector partition function was presented and polyhedral tools for the efficient computation of such functions was provided in \cite{Sturmfels1995}.

For an arbitrary function of integer arguments $\varphi: \mathbb Z^N_\geqslant \to \mathbb C$ we define a function
$$
P_A(\lambda;\varphi) = \sum_{ \substack {x : Ax = \lambda\\ x \in \mathbb Z^N_\geqslant}} \varphi(x), \lambda \in \mathbb Z^n
$$
which we call \emph{the vector partition function associated with $\varphi(x)$}.

For $\varphi(x) \equiv 1$, then the function $P_A(\lambda;\varphi)$  coincides with the classical function of the vector partition \eqref{eq5}. For $\varphi(x) = e^{-\langle x, y\rangle}$  we obtain a vector partition function of the form \eqref{eq6}. If we take $N=2, A=
\left(
\begin{array}{cc}
     1 & 1 \\
\end{array}
\right)
$
and $\varphi(x_1,x_2) = h(x_1)$, then $P_A(\lambda; \varphi) = \sum\limits_{\substack{x_1+x_2 = \lambda\\ x_1, x_2 \geqslant 0}} h(x_1) = \sum\limits_{x_1 = 0}^{\lambda} h(x_1)$.
Thus, the problem of finding the vector partition function $P_A(\lambda; \varphi)$  is a generalization of the classical summation's problem of functions of a discrete argument.

For further discussion and formulation of the main result we introduce some notations. Let $V = \{J\}$ be a set of all ordered sets $J = (j_1,j_2, \ldots, j_k)$, $1 \leqslant j_1 < \ldots < j_k \leqslant N, k = 0, 1, 2, \ldots, N$ and $\#J = k$ be a number of elements in the set $J$. Let $\pi_j$ be the projection operator along the $j$-th coordinate axis in $\mathbb R^n$, i.e. $\pi_j x = (x_1, \ldots, x_{j-1}, 0, x_{j+1}, \ldots, x_N)$, and define its adjoint action on the function $\varphi(x): \mathbb Z^N \to \mathbb C$ by:
$
\pi_j \varphi(x) = \varphi(\pi_j x), j = 1, \ldots, N.
$
Let $\mathbb C [[\xi]]$ be the ring of formal power series in the variable $\xi = (\xi_1, \ldots, \xi_N)$  and define the operator $\pi_j: \mathbb C [[\xi]] \to \mathbb C [[\xi]]$ for $j = 1, \ldots, N$ on the generating series $\Phi(\xi) = \sum\limits_{x \in \mathbb Z^N} \varphi(x) \xi^x$
as follows
$$
\pi_j \Phi(\xi) = \sum_{x \in \mathbb Z^N} \varphi(\pi_j x) \xi^{\pi_j x} = \Phi(\xi_1, \ldots, \xi_{j-1}, 0, \xi_{j+1}, \ldots, \xi_N).
$$
Furthermore, for $J \in V$ let $\pi_J = \pi_{j_1} \circ \cdots \circ \pi_{j_k}$ be a composition of operators
$\pi_{j_1}, \ldots, \pi_{j_k}$ and $\pi_\emptyset = 1$ is the the identity operator.

For complex-valued functions $\varphi(x)$ of integer arguments $x = (x_1, \ldots, x_N)$ we define a shift operator $\delta_j$ for $j = 1, \ldots, N$ as follows $$\delta_j \varphi(x) = \varphi(x_1, \ldots, x_{j-1}, x_j+1, x_{j+1}, \ldots, x_N)$$ and for $\mu = (\mu_1, \ldots, \mu_N) \in \mathbb Z^N$ we define $\delta^\mu = \delta_1^{\mu_1} \cdots \delta_N^{\mu_N}$.

For $c = (c_1, \ldots, c_N)\in \mathbb C^N$  we denote the operator $Q(\delta) = \delta_1 \cdot \ldots \cdot \delta_N - c_1 \delta_2 \cdot \ldots \cdot \delta_N - \cdots - c_N\delta_1 \cdot \ldots \cdot \delta_{N-1}$ and $z^A = (z^{\alpha^1}, \ldots, z^{\alpha^N})$.

Now we formulate a multidimensional analogue of identity (4).

\begin{theorem}\label{th1}
Let $P_A(\lambda;\varphi)$ be the vector partition function, associated with a function $\varphi: \mathbb Z^N_\geqslant \to \mathbb C$, and $\Phi(\xi)$ be the generating series for $\varphi(x)$. Then
\begin{equation}\label{th1formula}
\sum_{J\in V} (-1)^{\#J} \pi_J \left. \left[(1- \langle c, \xi\rangle) \Phi(\xi) \frac{}{}\right] \right|_{\xi = z^A} = \sum_{\lambda \in K \cap \, \mathbb Z^N} P_A(\lambda;Q(\delta) \varphi) z^\lambda.
\end{equation}
\end{theorem}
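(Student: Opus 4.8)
The plan is to prove \eqref{th1formula} by computing both sides as formal power series and matching coefficients, after first putting the operator on the left into product form. The key preliminary observation is that, because the projections $\pi_1,\dots,\pi_N$ pairwise commute and each is idempotent, the alternating sum over $V$ factors as
\[
\sum_{J\in V}(-1)^{\#J}\pi_J=\prod_{j=1}^N(1-\pi_j).
\]
Applied to a formal series $F(\xi)=\sum_{x}f(x)\xi^x$, the operator $\pi_j$ retains exactly the monomials with $x_j=0$, so $(1-\pi_j)$ kills those and keeps the monomials with $x_j\geqslant1$; hence the full product extracts precisely the part of $F$ supported on the strictly positive orthant $\{x\in\mathbb Z^N:x_j\geqslant1,\ j=1,\dots,N\}$. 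This is the structural heart of the argument and I would isolate it first.

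Next I would compute the coefficients of $G(\xi):=(1-\langle c,\xi\rangle)\Phi(\xi)$, extending $\varphi$ by zero off $\mathbb Z^N_\geqslant$. Since $\xi_i\Phi(\xi)=\sum_{x}\varphi(x-e_i)\xi^x$ (with $e_i$ the $i$-th standard basis vector), the coefficient of $\xi^y$ in $G$ is $\varphi(y)-\sum_{i=1}^N c_i\,\varphi(y-e_i)$. By the factorization above, the left-hand side of \eqref{th1formula} before substitution equals $\sum_{y\geqslant\mathbf 1}\bigl(\varphi(y)-\sum_{i}c_i\varphi(y-e_i)\bigr)\xi^y$, where $\mathbf 1=(1,\dots,1)$. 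Reindexing by $y=x+\mathbf 1$ with $x\in\mathbb Z^N_\geqslant$ and using $\delta_1\cdots\delta_N\varphi(x)=\varphi(x+\mathbf 1)$ together with $\prod_{j\neq i}\delta_j\,\varphi(x)=\varphi(x+\mathbf 1-e_i)$, the bracketed coefficient becomes exactly $Q(\delta)\varphi(x)$. Thus the extracted positive part of $G$ is $\sum_{x\in\mathbb Z^N_\geqslant}Q(\delta)\varphi(x)\,\xi^{x+\mathbf 1}$.

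Finally I would substitute $\xi=z^A$, i.e. $\xi_j=z^{\alpha^j}$, under which $\xi^x\mapsto z^{Ax}$, and regroup the sum over $x$ by the value $\lambda=Ax$. Collecting all $x\in\mathbb Z^N_\geqslant$ with $Ax=\lambda$ reassembles $\sum_{x:Ax=\lambda}Q(\delta)\varphi(x)=P_A(\lambda;Q(\delta)\varphi)$, and since no $x\geqslant0$ solves $Ax=\lambda$ for $\lambda\notin K$, the resulting $z$-series is supported on $K\cap\mathbb Z^n$, which is the right-hand side. Here the hypothesis that $K$ is pointed is what makes the whole manipulation legitimate: it guarantees that for each $\lambda$ only finitely many $x\in\mathbb Z^N_\geqslant$ satisfy $Ax=\lambda$, so that every $P_A(\lambda;Q(\delta)\varphi)$ is a finite sum and the substitution $\xi=z^A$ is a well-defined, locally finite operation on formal series.

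The step I expect to be most delicate is the bookkeeping of the shift by $\mathbf 1$ that appears when passing from the support $\{y\geqslant\mathbf 1\}$ to the partition-function index $x$: one must track the accompanying monomial $\xi^{\mathbf 1}\mapsto z^{\alpha^1+\cdots+\alpha^N}$ and verify that, under the conventions fixed for $Q(\delta)$ and $z^A$, the exponents on the two sides align. The one-dimensional case $N=1$, where \eqref{th1formula} must reduce to \eqref{eq4}, is a convenient consistency check. Everything else is routine formal-series algebra: the operator factorization, linearity of coefficient extraction, and the regrouping of a locally finite double sum.
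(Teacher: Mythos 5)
Your proposal is correct and follows essentially the same route as the paper: your factorization $\sum_{J\in V}(-1)^{\#J}\pi_J=\prod_{j=1}^N(1-\pi_j)$, read as extraction of the part of a series supported on $I+\mathbb Z^N_\geqslant$, is exactly the content of the paper's Lemmas \ref{lemma1} and \ref{lemma2} (your single extraction principle subsumes both), and your coefficient computation, reindexing by $I$, and substitution $\xi=z^A$ reproduce the paper's proof of \eqref{th2formula} and then of Theorem \ref{th1}. Even the shift bookkeeping you rightly flag as the delicate point (the monomial $\xi^{x+I}$ becoming $z^{Ax+AI}$ rather than $z^{Ax}$) is present, and glossed over in the same way, in the paper's own final regrouping, so your argument matches the published one in both substance and its one loose end.
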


We note that for $N = 1$ and $c = 1$, formula \eqref{th1formula} implies identity \eqref{eq4}.

The problem of finding \emph{the number of generalized lattice paths} is formulated as follows:
find the number of paths on an integer lattice from the origin to the point $\lambda \in \mathbb Z^n$ using only steps in $\Delta$.

A similar problem for $n=2$ connected with the study of multidimensional difference equations and its application for generalized Dyck paths (see \cite{LabelleYeh1990}) was considered in \cite{BousquetMelouPetkovsek2000}. We note that the Cauchy problem for multidimensional difference equations was considered also in \cite{Leinartas2007}, \cite{LeinartasRogozina2015} and some properties of generating function of its solution were investigated in \cite{LeinartasLyapin2009}.

A simple case of the problem above arises by choosing the set of steps which form an orthonormal basis in $\mathbb R^N$: $\alpha^j = e^j,  j = 1, \ldots, N$, and the number $\varphi(x)$ of paths from the origin to the point $x \in \mathbb Z^N$ satisfies the basic recurrence relation of combinatorial analysis
\begin{equation}\label{basicrecurrencerelation}
(1-\langle I, \delta^{-I} \rangle) \varphi(x) \equiv \varphi(x) - \varphi(x-e^1) - \cdots - \varphi(x-e^N) = 0
\end{equation}
where $x \in I + \mathbb Z^N_\geqslant$. For any solution of (\ref{basicrecurrencerelation}) we have:

\begin{proposition}\label{prep1}
If the function $\varphi(x)$ satisfies equation \eqref{basicrecurrencerelation}, then the associated vector partition function $P_A(\lambda;\varphi)$ satisfies the difference equation
$$
(1- \langle I, \delta_\lambda^{-A}\rangle) P_A(\lambda;\varphi) \equiv P_A(\lambda; \varphi) - \sum_{j=1}^{N} P_A(\lambda-\alpha^j;\varphi)= 0.
$$
\end{proposition}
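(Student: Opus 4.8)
The plan is to prove the identity directly from the definition of $P_A$ by one change of summation variable, reducing it to the pointwise recurrence \eqref{basicrecurrencerelation}. First I would expand the shifted terms: by definition
\[
P_A(\lambda-\alpha^j;\varphi)=\sum_{\substack{y\in\mathbb Z^N_\geqslant\\ Ay=\lambda-\alpha^j}}\varphi(y),
\]
and since $Ae^j=\alpha^j$ the map $y\mapsto x=y+e^j$ is a bijection from $\{y\geqslant 0:Ay=\lambda-\alpha^j\}$ onto $\{x\geqslant 0:Ax=\lambda,\ x_j\geqslant 1\}$ carrying $\varphi(y)$ to $\varphi(x-e^j)$. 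Because $K$ is pointed there is a linear functional positive on every $\alpha^j$, which bounds $\sum_j x_j$ on each fibre $\{x\geqslant 0:Ax=\lambda\}$ and makes it finite; I may therefore interchange the sum over $j$ with the sum over $x$ to get
\[
\sum_{j=1}^{N}P_A(\lambda-\alpha^j;\varphi)=\sum_{\substack{x\in\mathbb Z^N_\geqslant\\ Ax=\lambda}}\ \sum_{j:\,x_j\geqslant 1}\varphi(x-e^j).
\]

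Next I would subtract this from $P_A(\lambda;\varphi)=\sum_{x\geqslant 0,\,Ax=\lambda}\varphi(x)$ and compare term by term in $x$. For each $x$ with all coordinates positive, that is $x\in I+\mathbb Z^N_\geqslant$, the inner sum runs over all $j=1,\dots,N$ and \eqref{basicrecurrencerelation} gives $\sum_{j=1}^{N}\varphi(x-e^j)=\varphi(x)$, so such $x$ cancel and only the boundary points survive:
\[
P_A(\lambda;\varphi)-\sum_{j=1}^{N}P_A(\lambda-\alpha^j;\varphi)=\sum_{\substack{x\geqslant 0,\ Ax=\lambda\\ x_i=0\ \text{for some }i}}\Bigl(\varphi(x)-\sum_{j:\,x_j\geqslant 1}\varphi(x-e^j)\Bigr).
\]

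The main obstacle is to show that this boundary sum vanishes, since \eqref{basicrecurrencerelation} is imposed only on the open region $I+\mathbb Z^N_\geqslant$ and says nothing a priori at points lying on a coordinate hyperplane. I would dispose of it with the convention $\varphi(x)=0$ whenever some $x_i<0$, natural here because a path from the origin never leaves $\mathbb Z^N_\geqslant$: for the path-counting solution the omitted summands $\varphi(x-e^j)$ with $x_j=0$ then vanish, so that $\sum_{j:\,x_j\geqslant 1}\varphi(x-e^j)=\sum_{j=1}^{N}\varphi(x-e^j)$ and the recurrence persists in the form $\varphi(x)=\sum_{j=1}^{N}\varphi(x-e^j)$ at every $x\neq 0$. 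Each boundary bracket then vanishes except possibly at $x=0$, which occurs only for $\lambda=0$, so the stated difference equation holds for all $\lambda\neq 0$; for a general solution one additionally needs \eqref{basicrecurrencerelation} to survive on the faces of the orthant. The same conclusion also follows from Theorem~\ref{th1}: taking $c=I$ gives $Q(\delta)=\delta^{I}\bigl(1-\langle I,\delta^{-I}\rangle\bigr)$, so $Q(\delta)\varphi\equiv 0$ on $\mathbb Z^N_\geqslant$ and the right-hand side of \eqref{th1formula} is zero; the $J=\emptyset$ term on the left equals $(1-\langle I,z^A\rangle)\sum_\lambda P_A(\lambda;\varphi)z^\lambda$, whose coefficient of $z^\lambda$ is precisely $(1-\langle I,\delta_\lambda^{-A}\rangle)P_A(\lambda;\varphi)$, while the terms with $J\neq\emptyset$ involve only the monomials $z^{\alpha^i}$, $i\notin J$, and do not affect a generic coefficient, so matching coefficients reproduces the identity.
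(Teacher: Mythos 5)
Your computation is essentially the paper's own proof run in the opposite direction: the paper starts from $\sum_{x\geqslant 0,\,Ax=\lambda}\varphi(x-e^j)$ and reindexes by $x\mapsto x-e^j$ using the convention $\varphi\equiv 0$ off $\mathbb Z^N_\geqslant$, which is exactly your bijection $y\mapsto y+e^j$ read backwards; both arguments then subtract and appeal to the pointwise recurrence. The genuine difference is your handling of the boundary, and there you are more careful than the paper. The paper's opening step sums the left-hand side of \eqref{basicrecurrencerelation} over \emph{all} nonnegative $x$ with $Ax=\lambda$ and declares the total zero, although \eqref{basicrecurrencerelation} is hypothesized only on $I+\mathbb Z^N_\geqslant$; the points with some $x_i=0$ are precisely your ``boundary brackets,'' and the paper never addresses them. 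Your warning that these need not vanish is a real issue with the statement rather than a defect of your argument: already for $N=1$, $A=[1]$ the proposition reads $\varphi(\lambda)-\varphi(\lambda-1)=0$, which fails at $\lambda=0$ (for a pointed cone one always has $P_A(0;\varphi)-\sum_{j} P_A(-\alpha^j;\varphi)=\varphi(0)$), and a solution of \eqref{basicrecurrencerelation} with arbitrary values on the faces of the orthant violates the identity at nonzero $\lambda$ as well. So the proposition tacitly assumes what you make explicit: that the zero-extended recurrence persists at every $x\neq 0$ of the orthant, as it does for the path-counting solution. Your pointedness/finiteness justification is likewise a useful addition the paper omits. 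The alternative sketch via Theorem \ref{th1} with $c=I$ is consistent but strictly weaker: the $J\neq\emptyset$ terms are supported on the subcones spanned by $\{\alpha^i : i\notin J\}$, so coefficient matching yields the identity only for $\lambda$ outside those subcones --- the same boundary caveat in different clothing.
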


The following proposition relates the number of lattice paths $\varphi(x)$ and the number of generalized lattice paths $P_A(\lambda;\varphi)$:

\begin{proposition}\label{prep2}
If $\varphi(x)$ is the number of lattice paths, then the associated the vector partition function $P_A(\lambda;\varphi)$ coincides with the number of generalized lattice paths with steps in $\Delta$; in this case its generating function $F(z) = \sum\limits_{\lambda\in K \cap \mathbb Z^n} P_A(\lambda;\varphi)z^\lambda$ has the form
$$
  F(z) = \dfrac 1 {1 - z^{\alpha^1} - z^{\alpha^2} - \cdots - z^{\alpha^N}}.
$$
\end{proposition}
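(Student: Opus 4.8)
The plan is to prove the two assertions---the combinatorial identification of $P_A(\lambda;\varphi)$ and the closed form of its generating function---in sequence, with the second following cleanly from a reindexing of the defining double sum rather than from the recurrence of Proposition~\ref{prep1}.

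For the combinatorial part, I would first recall that when the steps are the standard basis vectors the number of lattice paths $\varphi(x)$ from the origin to $x=(x_1,\ldots,x_N)$ is the multinomial coefficient $\binom{x_1+\cdots+x_N}{x_1,\ldots,x_N}$, since such a path is an ordering of a word containing $x_j$ copies of the letter $j$, and this is the solution of \eqref{basicrecurrencerelation} with $\varphi(0)=1$. A generalized lattice path to $\lambda$ with steps in $\Delta$ is a sequence $\alpha^{i_1},\ldots,\alpha^{i_m}$ with $\alpha^{i_1}+\cdots+\alpha^{i_m}=\lambda$. Grouping such paths by the vector $x=(x_1,\ldots,x_N)$ that records how many times each step $\alpha^j$ is used, the constraint becomes $Ax=\lambda$, and each group contains exactly $\varphi(x)$ paths (the orderings of the corresponding word). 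Summing over all admissible $x$ gives $\sum_{Ax=\lambda,\,x\geqslant 0}\varphi(x)=P_A(\lambda;\varphi)$, which is precisely the number of generalized lattice paths.

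For the generating function, the key step is to interchange the order of summation. Writing $F(z)=\sum_{\lambda}\sum_{Ax=\lambda,\,x\geqslant 0}\varphi(x)\,z^\lambda$ and using $\lambda=Ax$ on the inner sum, the double sum collapses to a single sum over the orthant, $F(z)=\sum_{x\in\mathbb Z^N_\geqslant}\varphi(x)\,z^{Ax}$. Since $z^{Ax}=(z^{\alpha^1})^{x_1}\cdots(z^{\alpha^N})^{x_N}=(z^A)^x$, this is exactly the generating series $\Phi(\xi)$ of $\varphi$ evaluated at $\xi=z^A$, i.e.\ $F(z)=\Phi(z^A)$. It then remains to compute $\Phi$: grouping by $m=x_1+\cdots+x_N$ and applying the multinomial theorem yields $\Phi(\xi)=\sum_{m\geqslant 0}(\xi_1+\cdots+\xi_N)^m=1/(1-\xi_1-\cdots-\xi_N)$, and substituting $\xi=z^A$ gives the claimed formula.

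The calculation is short, so the only genuine point requiring care is the formal legitimacy of the geometric series and of the substitution $\xi=z^A$. Here I would invoke the hypothesis that the cone $K$ is pointed: this guarantees $\alpha^j\neq 0$ for every $j$, so each monomial $z^{\alpha^j}$ has no constant term, the series $1-z^{\alpha^1}-\cdots-z^{\alpha^N}$ is invertible in the ring of formal power series, and the composition $\Phi(z^A)$ is well defined and supported on $K\cap\mathbb Z^n$. As an alternative one could instead convert the recurrence of Proposition~\ref{prep1} into generating-series form, but that route introduces boundary terms near the origin that must be reconciled with the initial value $P_A(0;\varphi)=\varphi(0)=1$, so the direct reindexing above is the cleaner argument.
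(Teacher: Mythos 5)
Your proof is correct, but it takes a genuinely different route from the paper. The paper proves the generating-function formula as an application of its own machinery: it observes that since $\varphi$ satisfies the recurrence \eqref{basicrecurrencerelation}, the right-hand side of \eqref{th2formula} (the standard-basis case of Theorem \ref{th1}, with all $c_j=1$) vanishes, and then runs an induction on the number of variables, using the fact that $\pi_J\varphi$ counts lattice paths in a coordinate subspace so that $\pi_J\left[(1-\xi_1-\cdots-\xi_N)\Phi(\xi)\right]=1$ for $J\neq\emptyset$; the alternating-sum identity $\sum_{J\neq\emptyset}(-1)^{\#J}=-1$ then forces $(1-\xi_1-\cdots-\xi_N)\Phi(\xi)=1$, after which the substitution $\xi=z^A$ finishes the argument. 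You instead bypass Theorem \ref{th1} entirely: you identify $\varphi(x)$ explicitly as the multinomial coefficient $|x|!/x!$, compute $\Phi(\xi)=(1-\xi_1-\cdots-\xi_N)^{-1}$ directly by the multinomial theorem, and obtain $F(z)=\Phi(z^A)$ by reindexing the double sum. Your route is more elementary and self-contained, and it has two genuine advantages: it spells out the combinatorial identification of $P_A(\lambda;\varphi)$ with the number of generalized lattice paths (which the paper leaves implicit in the substitution step), and it explicitly addresses why the formal series manipulations are legitimate, via pointedness of $K$. The paper's route, by contrast, illustrates the use of the main identity --- the stated purpose of the proposition --- and needs only the recurrence plus boundary data rather than a closed form for $\varphi$. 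One small imprecision on your side: since the $\alpha^j\in\mathbb Z^n$ may have negative entries, $z^{\alpha^j}$ can be a Laurent monomial, so "no constant term in the ring of formal power series" is not quite the right justification; what pointedness actually provides is a linear functional $\ell$ with $\ell(\alpha^j)>0$ for all $j$, which makes each fiber $\{x\in\mathbb Z^N_\geqslant : Ax=\lambda\}$ finite and the geometric series summable in the ring of Laurent series supported on a pointed cone. This is a matter of phrasing, not a gap.
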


In 1960 T. Chaundy and J. Bullard in \cite{ChaundyBullard1960} considered the identity which is valid for $c_1, c_2 \in \mathbb C$ such that $c_1+c_2=1$ and nonnegative integers $\mu_1$  and $\mu_2$
$$
c_2^{\mu_2+1} \sum_{\nu_1=0}^{\mu_1} \binom{\mu_1+\mu_2-\nu_1}{\mu_1-\nu_1} c_1^{\mu_1-\nu_1} +
c_1^{\mu_1+1} \sum_{\nu_2=0}^{\mu_2} \binom{\mu_1+\mu_2-\nu_2}{\mu_2-\nu_2} c_2^{\mu_2-\nu_2} \equiv 1.
$$

This identity was subsequently found in approximation theory, nonrecursive digital filters \cite{Herrmann1971}, in the theory of wavelets \cite{Daubeacheis1992}, in the theory of Gauss hypergeometric functions. A detailed review, including various proofs of a one-dimensional case and a multidimensional analogues of this identity was given in \cite{KoornwinderSchlosser2008}, \cite{KoornwinderSchlosser2013} and \cite{Egorychev2011}. In \cite{KrivokoleskoLeinartas2012} similar identities were derived by using methods of generating functions and properties of Hadamars composition of multiple power series.

We give an analogue of the Chaundy-Bullard identity for vector partition function. For $j=1, \ldots, N$ we denote $\Delta_j = \Delta \setminus \{\alpha^j\}$ and $A_j = [\alpha^1, \ldots, \alpha^{j-1}, \alpha^{j+1}, \ldots, \alpha^N]$, then $K_j = \{\nu\in\mathbb Z^n : \nu = y_1 \alpha^1 + \ldots [j] \ldots + y_N \alpha^N, y\in\mathbb Z^N_\geqslant$\}; $c^x = c_1^{x_1} \cdots c_N^{x_N}$. Note that each cone $K_j \subset K$ is also pointed.

\begin{proposition}\label{prep3}
If $c_1 + c_2 + \cdots + c_N =1$ and $\varphi_j(x) = \frac{|x|!}{x!}\, c^{\,x+e^j}$, then for any $\mu = (\mu_1, \ldots, \mu_N) \in \mathbb Z^N$  the identity
\begin{equation}\label{prep3form_new}
 \sum_{j=1}^{N} \sum_{\substack{\nu \in K_{j}}} P_{A_j}(\nu) P_A(\mu - \nu;\varphi_j) = P_A (\mu)
\end{equation}
takes place.
\end{proposition}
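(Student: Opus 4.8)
The plan is to pass to generating series and reduce the identity \eqref{prep3form_new} to an elementary algebraic identity forced by the normalization $c_1 + \cdots + c_N = 1$. Throughout I write $w_i = z^{\alpha^i}$ and work in the ring of formal power series supported in the pointed cone $K$. Pointedness guarantees that for each fixed $\mu$ the inner sum over $\nu$ in \eqref{prep3form_new} is finite: the set $\{\nu : \nu \in K_j,\ \mu - \nu \in K\}$ is contained in the compact set $K \cap (\mu - K)$, so every series below is a legitimate element of that ring and Cauchy products are computed coefficientwise.

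First I would identify the two generating series attached to the factors of \eqref{prep3form_new}. For the classical partition function $P_{A_j}$ associated with $\Delta_j = \Delta \setminus \{\alpha^j\}$, expanding each geometric factor gives
\[
\sum_{\nu \in K_j} P_{A_j}(\nu)\, z^\nu = \sum_{y \in \mathbb Z^{N}_\geqslant} z^{A_j y} = \prod_{i \neq j} \frac{1}{1 - w_i}.
\]
For $P_A(\mu;\varphi_j)$ I would use $\varphi_j(x) = \frac{|x|!}{x!}\, c^{\,x+e^j} = c_j \binom{|x|}{x} c^x$ together with the multinomial theorem to get
\[
\sum_{\mu} P_A(\mu;\varphi_j)\, z^\mu = \sum_{x \in \mathbb Z^N_\geqslant} \varphi_j(x)\, z^{Ax} = c_j \sum_{k=0}^\infty \Big( \sum_{|x|=k} \binom{k}{x} \prod_i (c_i w_i)^{x_i} \Big) = \frac{c_j}{1 - \sum_{i=1}^N c_i w_i}.
\]
This is exactly Proposition \ref{prep2} after the substitution $w_i \mapsto c_i w_i$ and scaling by $c_j$, reflecting that $\binom{|x|}{x} c^x$ solves the weighted basic recurrence $\tilde\varphi(x) = \sum_i c_i\, \tilde\varphi(x - e^i)$. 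Finally, Proposition \ref{prep2} with $\varphi \equiv 1$ (or a direct product expansion) gives $\sum_\mu P_A(\mu)\, z^\mu = \prod_{i=1}^N (1 - w_i)^{-1}$.

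Next I would recognize the inner sum over $\nu$ as a convolution, so that after multiplying by $z^\mu$ and summing over $\mu$, interchanging the finite sums turns the entire left-hand side of \eqref{prep3form_new} into
\[
\sum_{j=1}^N \Big( \prod_{i \neq j} \frac{1}{1 - w_i} \Big) \frac{c_j}{1 - \sum_i c_i w_i} = \frac{1}{1 - \sum_i c_i w_i} \sum_{j=1}^N c_j \prod_{i \neq j} \frac{1}{1 - w_i}.
\]
The proposition is therefore equivalent to asserting that this series equals $\prod_{i=1}^N (1 - w_i)^{-1}$. Clearing the common denominator $\prod_i (1 - w_i)$ reduces the claim to $\sum_{j=1}^N c_j (1 - w_j) = 1 - \sum_i c_i w_i$; since the $w$-terms on the two sides already coincide, this is precisely the hypothesis $c_1 + \cdots + c_N = 1$.

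The only genuine obstacle here is bookkeeping rather than depth. I must make sure the manipulations are valid as identities of formal power series supported in a pointed cone — in particular that the rearrangement of $\sum_\mu \sum_j \sum_\nu$ into $\sum_j \big(\sum_\nu\big)\big(\sum_{\mu'}\big)$ is justified, and that comparing the coefficients of $z^\mu$ on both sides of the resulting rational-function identity indeed recovers \eqref{prep3form_new} for every $\mu$. Once this series framework is fixed, the core computation is the two-line cancellation above, where the normalization $c_1 + \cdots + c_N = 1$ enters in exactly one place.
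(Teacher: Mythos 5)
Your proof is correct, and its global skeleton coincides with the paper's: both pass to generating series, reduce \eqref{prep3form_new} to the rational-function identity \eqref{lemma4form}, substitute $\xi = z^A$ (your $w_i = z^{\alpha^i}$), and finish by comparing coefficients of $z^\mu$. The genuine difference is how the key identity \eqref{lemma4form} is established. The paper proves it as Lemma \ref{lemma4} by invoking its main machinery: formula \eqref{th2formula} with $\varphi \equiv 1$, whose right-hand side vanishes precisely because $c_1+\cdots+c_N=1$ annihilates $Q(\delta)1$, combined with the decomposition $1-\langle c,\xi\rangle = \sum_j c_j(1-\xi_j)$ and the annihilation property of $\mathbf\Pi$ from Lemma \ref{lemma3}. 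You instead verify the same identity by elementary algebra: clearing the common denominator $\prod_i(1-w_i)$ and observing that $\sum_j c_j(1-w_j) = 1-\sum_j c_j w_j$ is exactly the hypothesis $\sum_j c_j = 1$. You also evaluate $\sum_\mu P_A(\mu;\varphi_j)z^\mu = c_j\bigl(1-\sum_i c_i w_i\bigr)^{-1}$ directly via the multinomial theorem, which is the paper's expansion $\frac{1}{1-\langle c,\xi\rangle} = \sum_\mu \frac{|\mu|!}{\mu!}c^\mu\xi^\mu$ read in the opposite direction. What your route buys is a short, self-contained argument that never invokes Theorem \ref{th1} or Lemmas \ref{lemma1}--\ref{lemma4}; what the paper's route buys is the structural point of the article, namely that the Chaundy--Bullard identity is a corollary of the general operator identity of Theorem \ref{th1}. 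Two minor slips worth fixing: in your computation of $\sum_{\nu\in K_j} P_{A_j}(\nu)z^\nu$ the summation variable should range over $\mathbb Z^{N-1}_\geqslant$, since $A_j$ has $N-1$ columns; and besides the finiteness of the $\nu$-sum, you implicitly use that $\{x\in\mathbb Z^N_\geqslant : Ax=\mu\}$ is finite (so that $P_A(\mu;\varphi_j)$ itself is well defined), which likewise follows from pointedness of $K$ and deserves a word.
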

The sum on the left side of identity (\ref{prep3form_new}) is finite since all the cones $K_{j}, j=1, \ldots, N$ are pointed.

Note that for $\alpha^j = e^j, j=1, \ldots, N$ we obtain from \eqref{prep3form_new} a multidimensional Chaundy-Bullard identity:
\begin{equation*}
      \sum_{j=1}^{N} \sum_{\substack{0\leqslant \nu \leqslant \mu \\ \nu_j = 0}} \frac{(|\mu| - |\nu|)!} {(\mu - \nu)!} c^{\: \mu - \nu + e^j} \equiv 1,
\end{equation*}
where the double inequality $0\leqslant \nu \leqslant \mu$ means that $0\leqslant \nu_j \leqslant \mu_j$ for all $j = 1, \ldots, N$.

\section{Proofs}

In this section we prove the main result (see Theorem \ref{th1}) and its corollaries. First, using Lemmas \ref{lemma1} and \ref{lemma2}, we prove the identity \eqref{th1formula} for a set of standard basis vectors, and then, using a monomial substitution, we prove Theorem
\ref{th1}. The proofs of Propositions \ref{prep1} and \ref{prep2} follow directly from the main result. Then, using Lemmas \ref{lemma3} and \ref{lemma4}, we prove Proposition \ref{prep3}.

We prove identity \eqref{th1formula} for the case when the set of vectors $\Delta = \{\alpha^1, \ldots, \alpha^N \}$ consists of unit vectors $\alpha^j = e^j$, where the vector $e^j = (0, \ldots, 0, 1, 0, \ldots, 0)$  contains a unit on the $j$-th place for $j = 1, \ldots, N$. Then the vector partition function $P_A(\lambda;\varphi) = \varphi(x)$, and the generating series $\Phi(\xi) = \sum\limits_{x \in \mathbb Z^N_\geqslant} \varphi(x) \xi^x$ in identity  \eqref{th1formula} takes the form
\begin{equation}\label{th2formula}
\sum_{J \in V} (-1)^{\#J} \pi_J \left[ (1- \langle c,\xi \rangle) \Phi(\xi)\right] = \sum_{x \in I + \mathbb Z^N_\geqslant} (1-\langle c, \delta^{-I} \rangle) \varphi(x) \xi^x,
\end{equation}
where $\langle c, \xi \rangle = c_1 \xi_1 + \cdots + c_N \xi_N$, and $I = (1, \ldots, 1)$.

To prove \eqref{th2formula} we use following properties of the operator $\mathbf \Pi =\sum\limits_{J\in V}  (-1)^{\# J} \pi_J$:

\begin{lemma}\label{lemma1}
For an arbitrary series $\Phi(\xi) = \sum\limits_{x\in \mathbb Z^N_\geqslant} \varphi(x) \xi^x$ the operator $\mathbf \Pi$ acts on $\Phi(\xi)$ as follows
$$
\mathbf \Pi: \sum\limits_{x\in \mathbb Z^N_\geqslant} \varphi(x) \xi^x \mapsto \sum\limits_{x\in I + \mathbb Z^N_\geqslant} \varphi(x) \xi^x.
$$
\end{lemma}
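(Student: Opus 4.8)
The plan is to determine, for each fixed monomial $\xi^x$ with $x \in \mathbb Z^N_\geqslant$, the total coefficient that $\mathbf\Pi$ produces in front of $\varphi(x)\xi^x$, and to show that this coefficient equals $1$ precisely when every coordinate of $x$ is strictly positive and vanishes otherwise. The first step is to unwind the action of a single operator $\pi_j$ on the series. Since $\pi_j$ substitutes $\xi_j = 0$, it annihilates every monomial whose exponent $x_j$ is positive and leaves untouched every monomial with $x_j = 0$; iterating, $\pi_J$ retains exactly those terms of $\Phi(\xi)$ for which $x_j = 0$ for all $j \in J$, i.e.
$$
\pi_J \Phi(\xi) = \sum_{\substack{x \in \mathbb Z^N_\geqslant \\ x_j = 0 \text{ for all } j \in J}} \varphi(x)\, \xi^x.
$$

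With this in hand, I would collect the contributions to $\mathbf\Pi \Phi(\xi)$ coming from each $J \in V$. For a fixed $x$, let $Z(x) = \{\, j : x_j = 0 \,\}$ denote the set of vanishing coordinates. A monomial $\xi^x$ survives the operator $\pi_J$ exactly when $J \subseteq Z(x)$, so the coefficient of $\varphi(x)\xi^x$ in $\mathbf\Pi \Phi(\xi)$ is $\sum_{J \subseteq Z(x)} (-1)^{\#J}$, the sum ranging over the ordered subsets of $Z(x)$.

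The final step is the elementary inclusion–exclusion collapse: grouping subsets of $Z(x)$ by cardinality and applying the binomial theorem gives
$$
\sum_{J \subseteq Z(x)} (-1)^{\#J} = \sum_{k=0}^{|Z(x)|} \binom{|Z(x)|}{k}(-1)^k = (1-1)^{|Z(x)|},
$$
which equals $1$ when $Z(x) = \emptyset$ and $0$ otherwise. Since $Z(x) = \emptyset$ means exactly that $x_j \geqslant 1$ for every $j$, i.e. $x \in I + \mathbb Z^N_\geqslant$, only those monomials survive, each with coefficient one, which is the claimed identity. I do not expect a genuine obstacle here; the only point requiring care is the correct bookkeeping of which monomials each $\pi_J$ filters out, after which the result is a one-line consequence of $(1-1)^m$ vanishing for $m \geqslant 1$.
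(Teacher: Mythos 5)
Your proposal is correct, but it takes a genuinely different route from the paper. You expand $\mathbf\Pi$ as the signed sum $\sum_{J\in V}(-1)^{\#J}\pi_J$ and work monomial by monomial: the coefficient of $\varphi(x)\xi^x$ is $\sum_{J\subseteq Z(x)}(-1)^{\#J}=(1-1)^{\#Z(x)}$, where $Z(x)$ is the set of vanishing coordinates, and this collapses to $1$ exactly when $Z(x)=\emptyset$. The paper never computes coefficients of individual monomials; instead it keeps the operator intact, writes the factorization $\mathbf\Pi=(1-\pi_1)(1-\pi_2)\cdots(1-\pi_N)$ (valid because the $\pi_j$ commute and expanding the product reproduces the sum over $V$), and applies the factors sequentially: each $(1-\pi_j)$ deletes precisely the terms with $x_j=0$, so after $N$ steps only $x\in I+\mathbb Z^N_\geqslant$ survives. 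The two arguments encode the same cancellation --- your binomial collapse $(1-1)^m$ is the coefficientwise shadow of the product $\prod_{j\in Z(x)}(1-1)$ --- but they buy different things. Yours is more elementary and makes the inclusion--exclusion mechanism completely transparent, with no operator identity to justify. The paper's factorization is the more structural choice: it is reused essentially verbatim in Lemma \ref{lemma2} (where the factor $(1-\pi_j)$ is absorbed because $\pi_j$ annihilates $\xi_j\Phi(\xi)$) and in Lemma \ref{lemma3} (where $\mathbf\Pi=\mathbf\Pi_j(1-\pi_j)$), so proving Lemma \ref{lemma1} by factorization sets up the machinery used throughout the rest of the paper.
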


\begin{proof}
We represent the operator $\mathbf \Pi$ as a composition $\mathbf \Pi = (1-\pi_1)(1-\pi_2) \cdots (1-\pi_N)$, where $1 = \pi_\emptyset$ is an identity operator,  and use the commutativity of its factors to apply it to the series $\Phi(\xi)$:
\begin{multline*}
(1-\pi_1)(1-\pi_2) \cdots (1-\pi_N)  \Phi(\xi) = \\
= (1-\pi_1)(1-\pi_2) \cdots (1-\pi_{N-1}) \left[ \Phi(\xi) - \Phi(\pi_N \xi) \right]=\\
= (1-\pi_1)(1-\pi_2) \cdots (1-\pi_{N-1})  \sum\limits_{ x\in e^N + \mathbb Z^N_\geqslant} \varphi(x) \xi^x = \\
= (1-\pi_1)(1-\pi_2) \cdots (1-\pi_{N-2})  \sum\limits_{ x\in e^N + e^{N-1} + \mathbb Z^N_\geqslant} \varphi(x) \xi^x = \\
= \ldots = \sum\limits_{ x\in I + \mathbb Z^N_\geqslant} \varphi(x) \xi^x,
\end{multline*}
where $I = e^1 + e^2 + \ldots + e^N = (1, 1, \ldots, 1)$.
\end{proof}

\begin{lemma}\label{lemma2}
If $\mathbf \Pi_j = (1-\pi_1) \cdots (1-\pi_{j-1}) (1-\pi_{j+1}) \cdots (1-\pi_N)$, then for any $j = 1, \ldots, N$ the following equality holds
$$
    \mathbf \Pi \xi_j \Phi(\xi) = \mathbf \Pi_j \xi_j \Phi(\xi) = \sum\limits_{x \in I + \mathbb Z^N_\geqslant} \varphi(x - e^j) \xi^x.
$$
\end{lemma}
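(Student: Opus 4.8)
The plan is to establish the two asserted equalities in turn, exploiting the commutativity of the projection operators $\pi_1, \ldots, \pi_N$ together with the elementary observation that multiplication by $\xi_j$ produces a series that the single operator $\pi_j$ annihilates.

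First I would prove the left equality $\mathbf{\Pi}\,\xi_j\Phi(\xi) = \mathbf{\Pi}_j\,\xi_j\Phi(\xi)$. Writing $g(\xi) = \xi_j\Phi(\xi)$, the point is that $g$ carries a factor $\xi_j$, so setting $\xi_j = 0$ gives $\pi_j g = 0$. Since the remaining projections $\pi_i$ with $i \neq j$ act only on the variables $\xi_i$ and leave the factor $\xi_j$ untouched, every term $\pi_J g$ with $j \notin J$ still carries a factor $\xi_j$; consequently $\pi_j(\pi_J g) = 0$ for all such $J$. Because $\mathbf{\Pi}_j$ is built only from factors $(1-\pi_i)$ with $i \neq j$, expanding it produces exactly such terms, whence $\pi_j(\mathbf{\Pi}_j g) = 0$. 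Using the commutativity of the factors to write $\mathbf{\Pi} = (1-\pi_j)\mathbf{\Pi}_j$, I obtain $\mathbf{\Pi} g = \mathbf{\Pi}_j g - \pi_j \mathbf{\Pi}_j g = \mathbf{\Pi}_j g$, which is the desired identity.

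Next I would compute the common value explicitly. Expanding $g(\xi) = \xi_j\Phi(\xi) = \sum_{x \in \mathbb{Z}^N_\geqslant} \varphi(x)\,\xi^{x+e^j}$ and reindexing by $y = x + e^j$ gives $g(\xi) = \sum_{y \in e^j + \mathbb{Z}^N_\geqslant} \varphi(y - e^j)\,\xi^y$, a series supported on $\{y : y_j \geqslant 1,\ y_i \geqslant 0 \text{ for } i \neq j\}$. Applying $\mathbf{\Pi}_j$ now amounts to rerunning the telescoping computation of Lemma \ref{lemma1} with the single factor $(1-\pi_j)$ absent: each factor $(1-\pi_i)$ with $i \neq j$ advances the lower bound of the $i$-th summation index from $0$ to $1$, so the support is refined to $y_i \geqslant 1$ for every $i \neq j$. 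Combined with the already-present condition $y_j \geqslant 1$, this yields support on $I + \mathbb{Z}^N_\geqslant$, giving $\sum_{y \in I + \mathbb{Z}^N_\geqslant} \varphi(y - e^j)\,\xi^y$ as claimed.

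The only genuine subtlety is the first step: one must verify that $\pi_j$ annihilates not merely $g$ but the whole image $\mathbf{\Pi}_j g$, i.e. that the factor $\xi_j$ survives the action of the other projections. This is immediate once one notes that $\pi_i$ for $i \neq j$ operates solely on $\xi_i$, but it is precisely the observation that renders the seemingly asymmetric identity $\mathbf{\Pi} = \mathbf{\Pi}_j$ on $\xi_j\Phi$ transparent. The second step is then a routine repetition of the argument already carried out in Lemma \ref{lemma1}.
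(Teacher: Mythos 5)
Your proof is correct and follows essentially the same route as the paper: factor $(1-\pi_j)$ out of $\mathbf \Pi$ by commutativity, use the fact that $\pi_j$ annihilates a series carrying the factor $\xi_j$ to reduce $\mathbf \Pi$ to $\mathbf \Pi_j$, then reindex $\xi_j\Phi(\xi)$ and rerun the telescoping argument of Lemma \ref{lemma1} on the remaining factors. The only cosmetic difference is that the paper applies $(1-\pi_j)$ innermost, so it needs only $\pi_j\left(\xi_j\Phi(\xi)\right)=0$, whereas your outermost placement forces the (correct but avoidable) extra check that $\mathbf \Pi_j$ preserves the factor $\xi_j$.
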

\begin{proof}
Similarly as in the proof of Lemma \ref{lemma1} we represent the operator  $\mathbf \Pi$ as a composition and apply it to $\xi_j \Phi(\xi) \in \mathbb C[[\xi]]$:
    \begin{multline*}
      (1-\pi_1)(1-\pi_2) \cdots (1-\pi_N) \left[  \xi_j \Phi(\xi)\right] = \\
      = (1-\pi_1) \cdots (1-\pi_{j-1}) (1-\pi_{j+1}) \cdots (1-\pi_N) \left[ (1-\pi_j) \xi_j \Phi(\xi)\right] = \\
      = (1-\pi_1) \cdots (1-\pi_{j-1}) (1-\pi_{j+1}) \cdots (1-\pi_N) \left[ \xi_j \Phi(\xi) - \pi_j\xi_j \Phi(\xi)\right] = \\
      = (1-\pi_1) \cdots (1-\pi_{j-1}) (1-\pi_{j+1}) \cdots (1-\pi_N)  \left[ \xi_j \Phi(\xi)\right] = \\
      = \mathbf \Pi_j   \sum\limits_{x \in e^j + \mathbb Z^N_\geqslant} \varphi(x - e^j) \xi^x
      =  \sum\limits_{x \in I + \mathbb Z^N_\geqslant} \varphi(x - e^j) \xi^x.
    \end{multline*}
\end{proof}

By using Lemma \ref{lemma1} and \ref{lemma2}, we now prove \eqref{th2formula}.

\begin{proof}
We apply the operator  $\mathbf \Pi$ to the product $(1- \langle c,\xi \rangle) \Phi(\xi)$ and use Lemmas \ref{lemma1} and \ref{lemma2} to obtain:
  \begin{multline*}
    \mathbf \Pi \left[ (1- \langle c,\xi \rangle) \Phi(\xi) \right] = \mathbf \Pi\Phi(\xi) - \mathbf \Pi \left[\langle c,\xi \rangle \Phi(\xi)\right] = \mathbf \Pi\Phi(\xi) - \langle  c, \mathbf\Pi \xi \rangle \Phi(\xi) = \\
    = \mathbf \Pi\Phi(\xi) - c_1 \mathbf\Pi_1 \xi_1 \Phi(\xi) - \cdots - c_N \mathbf\Pi_N \xi_N \Phi(\xi) = \\
    = \sum\limits_{x \in I + \mathbb Z^N_\geqslant} \varphi(x) \xi^x  - c_1 \sum\limits_{x \in I + \mathbb Z^N_\geqslant} \varphi(x - e^1) \xi^x - \ldots - c_N \sum\limits_{x \in I + \mathbb Z^N_\geqslant} \varphi(x-e^N) \xi^x = \\
    = \sum\limits_{x \in I + \mathbb Z^N_\geqslant} \left[\varphi(x) - c_1 \varphi(x - e^1) - \ldots - c_N \varphi(x-e^N)\right] \xi^x = \\
    = \sum_{x \in I + \mathbb Z^N_\geqslant} \left[(1-\langle c, \delta^{-I} \rangle) \varphi(x)\right] \xi^x.
  \end{multline*}
\end{proof}

Now we proceed to the proof of the main theorem.
\begin{proof}
We substitute the monomial replacement of the variables $\xi = z^A$ in the formula \eqref{th2formula} to obtain
\begin{multline*}
\xi^x = (z_1^{\alpha_1^1} \cdots z_n^{\alpha_n^1})^{x_1} \ldots (z_1^{\alpha_1^N} \cdots z_n^{\alpha_n^N})^{x_N} = z_1^{x_1 \alpha_1^1 + \ldots + x_N \alpha_1^N} \cdots z_n^{x_1 \alpha_n^1 + \ldots + x_N \alpha_n^N} = z^\lambda,
\end{multline*}
where $\lambda = Ax$. We further observe that if $x \in I + \mathbb Z^N_\geqslant$, then $\lambda \in K\cap \mathbb Z^n$.
Therefore,
\begin{multline*}
 \sum_{J \in V} \left.(-1)^{\#J} \pi_J \left[ (1- \langle c,\xi \rangle) \Phi(\xi)\right] \frac{}{} \right|_{\xi = z^A} = \\
 = \sum\limits_{x \in I + \mathbb Z^N_\geqslant} \left.\left[\varphi(x) - c_1 \varphi(x - e^1) - \ldots - c_N \varphi(x-e^N)\right] \xi^x \frac{}{} \right|_{\xi = z^A} = \\
 = \sum_{\lambda \in K\cap\, \mathbb Z^n} \sum_{\substack{Ax = \lambda \\ x \in \mathbb Z^N_\geqslant}} \left[\varphi(x+I) - c_1 \varphi(x+I - e^1) - \ldots - c_N \varphi(x + I - e^N)\right] z^\lambda = \\
 = \sum_{\lambda \in K\cap\, \mathbb Z^n} P_A \left(\lambda; Q(\delta)\varphi \right) z^\lambda,
\end{multline*}
where $Q(\delta) = \delta_1 \cdot \ldots \cdot \delta_N - c_1 \delta_2 \cdot \ldots \cdot \delta_N - \cdots - c_N\delta_1 \cdot \ldots \cdot \delta_{N-1}$.
\end{proof}

Now we prove Proposition \ref{prep1}.

\begin{proof}
Summing the left side of the basic recurrence relation (\ref{basicrecurrencerelation}) over all integer nonnegative $x: Ax = \lambda$, we obtain
\begin{equation}\label{prep1.proof}
  \sum_{x: Ax = \lambda}\varphi(x) - \sum_{x: Ax = \lambda}\varphi(x-e^1) - \cdots -\sum_{x: Ax = \lambda}\varphi(x-e^N) = 0.
\end{equation}

Note that for any $j = 1, \ldots, N$ we have
\begin{gather*}
  \sum_{\substack{x: Ax = \lambda \\ x\geqslant 0}}\varphi(x-e^j) = \sum_{\substack{x: A(x - e^j + e^j) = \lambda \\ x\geqslant 0}} \varphi(x-e^j)
  = \sum_{\substack{x: A(x - e^j) = \lambda - \alpha^j \\ x\geqslant 0}} \varphi(x-e^j) = \\ = \sum_{\substack{x: Ax = \lambda - \alpha^j \\ x\geqslant -e^j}} \varphi(x) =  \sum_{\substack{x: Ax = \lambda - \alpha^j \\ x\geqslant 0}} \varphi(x) -  \sum_{\substack{x: Ax = \lambda - \alpha^j \\ x_j = -e^j}} \varphi(x) = \\ = P_A(\lambda-\alpha^j;\varphi),
\end{gather*}
since $\varphi(x) = 0$ for all $x \notin \mathbb Z^N_\geqslant$.
Then from \eqref{prep1.proof} we obtain
\begin{equation*}
  P_A(\lambda;\varphi) - P_A(\lambda-\alpha^1;\varphi) - \ldots - P_A(\lambda-\alpha^N;\varphi) = 0.
\end{equation*}
\end{proof}

Now we prove Preposition \ref{prep2}.

\begin{proof}
We prove that the generating function $\Phi(\xi)$ of the number of lattice paths is equal to $\Phi(\xi) =  (1-\xi_1-\xi_2-\ldots-\xi_N)^{-1}$. Indeed, the number $\varphi(x)$ of lattice paths satisfies the basic recurrence relation (\ref{basicrecurrencerelation}). Therefore, the right side of (\ref{th2formula}) vanishes. Now we use induction on the number $N$ of variables $\xi = (\xi_1,\xi_2, \ldots, \xi_N)$. For $N=1$ formula \eqref{th2formula} takes the form $(1-\xi)\Phi(\xi) -1 =0$, where $\Phi(\xi) = (1-\xi)^{-1}$ and for any number $m<N$ of variables $(\xi_{i_1}, \xi_{i_2}, \ldots, \xi_{i_m})$ the generating function $\Phi(\xi_{i_1}, \xi_{i_2}, \ldots, \xi_{i_m}) = (1 - \xi_{i_1} - \xi_{i_2} - \cdots - \xi_{i_m})^{-1}$.

We note that for any $J = \{j_1, j_2, \ldots, j_k\}, k \leqslant N, J \neq \emptyset$ the number of lattice paths in $\mathbb Z^{N-k} = \mathbb Z^N \cap \{x_{j_1} = \cdots = x_{j_k} = 0\}$  can be written as a function $\pi_J \varphi(x)$, and the induction hypothesis implies that its generating function $\Phi(\xi)$ satisfies the relation $\pi_J \Phi(\xi) = \pi_J (1-\xi_1-\cdots - \xi_N)^{-1}$ or $\pi_J \left[(1-\xi_1-\cdots - \xi_N) \Phi(\xi)\right] = 1$.

Next, we select in \eqref{th2formula} the term corresponding to $J =\emptyset$:
\begin{equation*}
  (1-\xi_1 - \ldots - \xi_N) \Phi(\xi) + \sum_{\substack{J \neq \emptyset \\ J \in V}} (-1)^{\# J} 1 = 0.
\end{equation*}
The equality  $\sum\limits_{\substack{J \neq \emptyset \\ J \in V}} (-1)^{\# J} 1 = - C^N_{N-1} + C^N_{N-2} + \cdots + (-1)^N C^N_0 = -1$ implies that the induction statement is also true for $m = N$. After making the substitution $\xi = z^A$ we obtain Proposition \ref{prep2}.
\end{proof}
The following two lemmas are required to prove the Proposition \ref{prep3}.

\begin{lemma}\label{lemma3}
If the function $\Phi(\xi)$ does not depend on the variable $\xi_j$, then ${\bf\Pi} \Phi(\xi) = 0$.
\end{lemma}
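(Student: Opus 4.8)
The plan is to exploit the factorized form of the operator $\mathbf{\Pi}$ that was already established in the proof of Lemma~\ref{lemma1}, namely
$$
\mathbf{\Pi} = \sum_{J\in V}(-1)^{\#J}\pi_J = (1-\pi_1)(1-\pi_2)\cdots(1-\pi_N),
$$
together with the commutativity of the factors $(1-\pi_i)$. Since these factors commute, for the index $j$ singled out in the hypothesis I would pull its factor to the right and write
$$
\mathbf{\Pi} = \Bigl[\,\prod_{i\neq j}(1-\pi_i)\Bigr](1-\pi_j),
$$
reducing the whole claim to understanding the single factor $(1-\pi_j)$ acting on $\Phi(\xi)$.

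The key observation is that the projection operator $\pi_j$ merely substitutes $\xi_j=0$ into the series, i.e. $\pi_j\Phi(\xi)=\Phi(\xi_1,\ldots,\xi_{j-1},0,\xi_{j+1},\ldots,\xi_N)$. If $\Phi(\xi)$ does not depend on $\xi_j$, then this substitution changes nothing, so $\pi_j\Phi(\xi)=\Phi(\xi)$ and hence $(1-\pi_j)\Phi(\xi)=0$. Applying the remaining factors $\prod_{i\neq j}(1-\pi_i)$ to the zero series yields $\mathbf{\Pi}\Phi(\xi)=0$, which is exactly the assertion.

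I do not expect any genuine obstacle here; the statement is a direct structural consequence of the product decomposition of $\mathbf{\Pi}$. The only point deserving a line of justification is the commutativity of the factors $(1-\pi_i)$, which follows because the coordinate substitutions $\xi_i\mapsto 0$ and $\xi_{i'}\mapsto 0$ act on disjoint variables and therefore commute; this is the same fact already used silently in Lemmas~\ref{lemma1} and~\ref{lemma2}, so it may simply be cited. With that in hand the argument is two short steps and requires no computation.
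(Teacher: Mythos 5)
Your proof is correct and follows essentially the same route as the paper: the paper likewise writes $\mathbf{\Pi} = \mathbf{\Pi}_j (1-\pi_j)$ with $\mathbf{\Pi}_j = (1-\pi_1)\cdots[j]\cdots(1-\pi_N)$, observes that $(1-\pi_j)\Phi(\xi) = \Phi(\xi) - \pi_j\Phi(\xi) = 0$ because $\Phi$ does not depend on $\xi_j$, and concludes $\mathbf{\Pi}\Phi(\xi) = 0$. Your extra remark justifying commutativity of the factors is a harmless addition that the paper leaves implicit.
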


\begin{proof}
We consider the operator ${\bf\Pi}_j = (1-\pi_1) \circ \cdots {[j]} \cdots \circ (1-\pi_N), j = 1, \ldots, N$. Since the function $\Phi(\xi)$ does not depend on the variable $\xi_j$, we have $(1-\pi_j) \Phi(\xi) = \Phi(\xi) - \pi_j \Phi(\xi) = 0$, therefore ${\bf\Pi} \Phi(\xi) = {\bf\Pi}_j (1-\pi_j) \Phi(\xi) = 0.$
\end{proof}

\begin{lemma}\label{lemma4}
For any complex values $c_1, \ldots, c_N$ such that $c_1 + \cdots + c_N = 1$,  the rational fraction $(I - \xi)^{-1}$ can be represented as follows:
\begin{equation}\label{lemma4form}
     \frac {1}{I - \xi} = \sum_{j=1}^{N} \frac{c_j}{\pi_j (I - \xi)} \cdot \frac{1}{1-\langle c, \xi \rangle}.
\end{equation}
\end{lemma}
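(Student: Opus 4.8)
The plan is to read the symbol $(I-\xi)^{-1}$ as the multi-index rational fraction $\prod_{k=1}^{N}(1-\xi_k)^{-1}$, so that applying the operator $\pi_j$ (which substitutes $\xi_j=0$) simply deletes the $j$-th factor and leaves $\frac{1}{\pi_j(I-\xi)}=\prod_{k\neq j}(1-\xi_k)^{-1}$. With this reading the whole statement reduces to an elementary common-denominator computation in the field of fractions $\mathbb C(\xi)$, and the hypothesis $c_1+\cdots+c_N=1$ enters at exactly one point: it collapses a constant term at the end.

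First I would record the key per-factor identity, valid for each $j=1,\ldots,N$:
\[
\frac{1}{\pi_j(I-\xi)}=\prod_{k\neq j}\frac{1}{1-\xi_k}=(1-\xi_j)\,\frac{1}{I-\xi}.
\]
Multiplying by $c_j$, summing over $j$, and pulling out the common factor $\frac{1}{I-\xi}$ then gives
\[
\sum_{j=1}^{N}\frac{c_j}{\pi_j(I-\xi)}=\left(\sum_{j=1}^{N}c_j(1-\xi_j)\right)\frac{1}{I-\xi}.
\]
Next I would simplify the bracketed sum using the normalization, writing $\sum_{j=1}^{N}c_j(1-\xi_j)=\sum_{j=1}^{N}c_j-\sum_{j=1}^{N}c_j\xi_j=1-\langle c,\xi\rangle$, where the first sum equals $1$ precisely because $c_1+\cdots+c_N=1$. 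This yields $\sum_{j=1}^{N}\frac{c_j}{\pi_j(I-\xi)}=(1-\langle c,\xi\rangle)\,\frac{1}{I-\xi}$, and dividing through by $1-\langle c,\xi\rangle$ and rearranging produces exactly \eqref{lemma4form}.

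I do not expect a genuine obstacle: the argument is a one-line partial-fraction manipulation. The two points worth stating carefully are, first, that $(I-\xi)^{-1}$ is to be understood as the product $\prod_k(1-\xi_k)^{-1}$ rather than $(1-\langle I,\xi\rangle)^{-1}$, since this is what makes the factorization $\frac{1}{\pi_j(I-\xi)}=(1-\xi_j)\frac{1}{I-\xi}$ valid; and second, that division by $1-\langle c,\xi\rangle$ is legitimate in $\mathbb C[[\xi]]$ because that series has constant term $1$ and is therefore invertible. With these conventions fixed, the identity is immediate from the normalization $c_1+\cdots+c_N=1$.
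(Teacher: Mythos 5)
Your proof is correct, and it takes a genuinely more elementary route than the paper's. You read $(I-\xi)^{-1}$ as $\prod_{k}(1-\xi_k)^{-1}$ (which is indeed the paper's convention, as its use in the proof of Proposition \ref{prep3} confirms), establish the per-factor identity $\frac{1}{\pi_j(I-\xi)} = (1-\xi_j)\,\frac{1}{I-\xi}$, sum against the weights $c_j$, and use $c_1+\cdots+c_N=1$ to collapse $\sum_{j} c_j(1-\xi_j)$ into $1-\langle c,\xi\rangle$; dividing by the series $1-\langle c,\xi\rangle$, invertible in $\mathbb C[[\xi]]$ since its constant term is $1$, finishes the argument. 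The paper instead derives the lemma from its own machinery: it specializes identity \eqref{th2formula} to $\varphi\equiv 1$, notes that the right-hand side vanishes precisely because $\sum_j c_j=1$ (the constant function solves the weighted recurrence $\varphi(x)=\sum_j c_j\varphi(x-e^j)$), splits the operator $\mathbf{\Pi}$ as $1+(\mathbf{\Pi}-1)$, and then kills the residual terms $\mathbf{\Pi}\,\frac{c_j}{\pi_j(I-\xi)}$ using Lemma \ref{lemma3}. Notably, the paper's proof still passes through your key factorization (it rewrites $(1-\langle c,\xi\rangle)\Phi(\xi)$ as $\sum_j \frac{c_j}{\pi_j(I-\xi)}$ en route), so your computation is in effect the algebraic core of the paper's argument with the operator scaffolding stripped away. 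What your version buys is self-containedness and brevity: it needs neither formula \eqref{th2formula} nor Lemma \ref{lemma3}, only partial-fraction algebra in $\mathbb C(\xi)$. What the paper's version buys is conceptual uniformity: it exhibits the lemma as one more instance of the principle that solutions of the weighted basic recurrence make the right-hand side of \eqref{th1formula} vanish, the same mechanism that drives Proposition \ref{prep2}, at the cost of invoking heavier machinery for a statement that, as you show, is elementary.
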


\begin{proof}
 Since $c_1 + \cdots + c_N = 1$ and $\varphi(x) \equiv 1$, the right side of \eqref{th2formula} vanishes, and its left side can be written as follows:
$$
(1-\langle c, \xi \rangle) \frac 1{I - \xi} + \sum_{J\neq \emptyset} (-1)^{\#J} \pi_J (1-\langle c, \xi \rangle)\Phi(\xi) = 0.
$$
Since $\sum\limits_{J\neq \emptyset} (-1)^{\#J} \pi_J  = {\bf\Pi} - 1$ and $ (1-\langle c, \xi \rangle) = \sum\limits_{j=1}^{N} c_j (1-\xi_j)$, we obtain
\begin{equation*}
\frac{1-\langle c, \xi \rangle}{I - \xi} + \sum_{j=1}^{N} {\bf\Pi} \frac{c_j}{\pi_j (I - \xi)} = \sum_{j=1}^{N} \frac{c_j}{\pi_j (I - \xi)}.
\end{equation*}
Furthermore, Lemma \ref{lemma3} implies
\begin{equation*}
 {\bf \Pi} \left( \pi_j \frac{c_j}{I-\xi}\right) = 0, j = 1, \ldots, N,
\end{equation*}
which completed the proof.
\end{proof}

Now we prove the Chaundy-Bullard identity for the vector partition function (Proposition \ref{prep3}).

\begin{proof}
We expand both sides of \eqref{lemma4form} in power series in $\xi^\mu$ to obtain
\begin{equation}\label{proofprep3f1}
   \sum_{\mu\in\mathbb Z^N_\geqslant} \xi^\mu = \sum_{j=1}^{N} \frac{c_j}{\pi_j (I - \xi)} \sum_{\mu\in\mathbb Z^N_\geqslant} \frac{c^\mu |\mu|!}{\mu!}\, \xi^\mu.
\end{equation}
We recall that
$
K = \{\mu\in\mathbb Z^n : \mu = x_1 \alpha^1 + \ldots + x_N \alpha^N, x\in\mathbb Z^N_\geqslant\},
K_{j} = \{\nu\in\mathbb Z^n : \nu = y_1 \alpha^1 + \ldots [j] \ldots + y_N \alpha^N, y\in\mathbb Z^N_\geqslant\},
$
and $ K_{j} \subset K$ for $j = 1, \ldots, N$.  In \eqref{proofprep3f1} we substitute $\xi = z^A$ and transform the left side as follows
\begin{equation*}
 \frac{1}{I - z^A} = \sum_{x\in \mathbb Z^n_\geqslant} z^{Ax}= \sum_{\mu \in K} \left(\sum_{\substack{x : Ax = \mu \\ x\in \mathbb Z^n_\geqslant}} 1 \right) z^\lambda = \sum_{\mu \in K} P_A(\mu) z^\mu.
\end{equation*}

We denote $\varphi_j(x) = \frac{|x|!}{x!}\,c^{x+e^j}$, then the right side of \eqref{proofprep3f1} takes the form
\begin{multline*}
  \sum_{j=1}^{N} c_j \pi_j (I - \xi)^{-1} (1- \langle c,\xi \rangle)^{-1} = \sum_{j=1}^{N} \left( \sum_{\substack{y \geqslant 0 \\ y_j =0}} \xi^y \cdot \sum_{x\in\mathbb Z^N_\geqslant} \varphi_j(x) \right) =\\
  = \sum_{j=1}^{N} \left( \sum_{\nu \in K_{j}} \left( \sum_{\substack{y : Ay = \nu\\y_j=0}} 1 \right) z^\nu \cdot \sum_{\lambda\in K} \left( \sum_{x : Ax = \lambda}\varphi_j(x) \right) z^\lambda \right) = \\
  =\sum_{j=1}^{N} \left(  \sum_{\nu \in K_{j}} P_{\Delta_j}(\nu) z^\nu  \cdot  \sum_{\lambda \in K} P_{\Delta}(\lambda; \varphi_j(x)) z^\lambda \right) =\\
  = \sum_{\mu \in K} \left( \sum_{j=1}^{N} \sum_{\substack{\nu+\lambda = \mu\\ \nu \in K_{j}\\ \lambda\in K}} P_{\Delta_j}(\nu) P_\Delta(\lambda;\varphi_j(x)) \right) z^\mu.
\end{multline*}

Equating the coefficients of $z^\mu$ yields
\begin{equation*}
  P_A (\mu) = \sum_{j=1}^{N} \sum_{\substack{\nu+\lambda = \mu\\ \nu \in K_{j}\\ \lambda\in K}} P_{A_j}(\nu) P_A(\lambda;\varphi_j(x)) = \sum_{j=1}^{N} \sum_{\substack{\nu \in K_{j}}} P_{A_j}(\nu) P_A(\mu - \nu;\varphi_j(x)),
\end{equation*}
which completes the proof.
\end{proof}

\section*{Acknowledgements}

\emph{The authors express their gratitude to E.K. Leinartas for guidance and permanent interest to the work. The authors thank W.M. Lawton for helpful comments which significantly improved this paper.}

\section*{Funding}

\emph{The second author was supported by the PhD SibFU grant for support of scientific research No: 14.}


\begin{thebibliography}{99}

\bibitem{BousquetMelouPetkovsek2000}
M. Bousquet-Melou, M. Petkovsek, \emph{Linear recurrences with constant coefficients: the multivariate case}, Discrete Mathematics, 225 (2000), pp. 51--75.

\bibitem{BrionVergne1997}
M. Brion, M. Vergne, \emph{Residue formulae, vector partition functions and lattice points in rational polytopes}, J. American Math.Soc., Vol. 10, no. 4 (1997), pp. 797--833.

\bibitem{ChaundyBullard1960}
T.W. Chaundy, J.E. Bullard, \emph{John Smith's problem}, Math. Gazette, Vol. 44, (1960), pp. 253--260.

\bibitem{Daubeacheis1992}
I. Daubeacheis, \emph{Ten Lectures on Wavelets}, SIAM, Philadelphia, PA,  1992.

\bibitem{Egorychev2011}
G.P. Egorychev, \emph{Combinatorial identity from the theory of
integral representations in $\mathbb C^n$}, Irkutsk Gos. Univ. Mat., 4(4), (2011), pp. 32--44 (in Russian).

\bibitem{Herrmann1971}
O. Herrmann, \emph{On the approximation problem in nonrecursive digital filter design}, IEEE Trans. Circuit Theory Vol. 18 (1971), pp. 411--413.

\bibitem{KoornwinderSchlosser2008}
T.H. Koornwinder, M.J. Schlosser, \emph{On an identity by Chaundy and Bullard. I}, Indag. Math.(N.S.), 19 (2008), pp. 239-–261.

\bibitem{KoornwinderSchlosser2013}
T.H. Koornwinder, M.J. Schlosser, \emph{On an identity by Chaundy and Bullard. II}, Indag. Math.(N.S.), 24 (2013), pp. 174-–180.

\bibitem{KrivokoleskoLeinartas2012}
V.P. Krivokolesko, E.K. Leinartas, \emph{On identities with polynomial coefficients}, Irkutsk Gos. Univ. Mat., 5(3)(2012), pp. 56--63 (in Russian).

\bibitem{LabelleYeh1990}
J. Labelle, Y.N. Yeh, \emph{Generalized Dyck paths}, Discrete
Mathematics. Vol. 82, Issue~1 (1990), pp. 1--6.

\bibitem{LeinartasLyapin2009}
E.K. Leinartas, A.P. Lyapin, \emph{On the Rationality of Multidimentional Recusive Series}, Journal of Siberian Federal University Mathematics \& Physics,  2(4), (2009), pp. 449--455.

\bibitem{Leinartas2007}
E.K. Leinartas, \emph{Multiple Laurent series and fundamental solutions of linear difference equations}, Siberian Math. J. 48(2), (2007), pp. 268--272.

\bibitem{LeinartasRogozina2015}
E.K. Leinartas, M.S. Rogozina, \emph{Solvability of the Cauchy problem for a polynomial difference operator and monomial bases for the quotients of a polynomial ring}, Siberian Math. J. 56(1), (2015), pp. 92--100.

\bibitem{PukhlikovKhovanskii1993}
A.V. Pukhlikov, A.G. Khovanskii, \emph{The
Riemann-Roch theorem for integrals and sums of quasipolynomials on virtual polytopes}, St. Petersburg Mathematical Journal, 4 (1993), no. 4, pp. 789--812.

\bibitem{Stanley1990}
R. Stanley, \emph{Enumerative Combinatorics}, Volume 1, 1990.

\bibitem{Sturmfels1995}
B. Sturmfels, \emph{On vector partition functions}, Journal of Combinatorial Theory. Series A 72 (1995), pp. 302--309.
\end{thebibliography}
\end{document}